\nonstopmode\numberwithin{equation}{section}
\newtheorem{proposition}{Proposition}[section]
\newtheorem{definition}{Definition}[section]
\newtheorem{theorem}{Theorem}[section]
\newtheorem{lemma}{Lemma}[section]
\newcommand{\be}{\begin{equation}}
\newcommand{\ee}{\end{equation}}
\newcommand{\R}{\mathbb{ R}}
\begin{document}
\title{On some collinear configurations  in the planar three-body problem}

\author{
Alexei Tsygvintsev \\
}
{
\noindent 
UMPA, Ecole Normale Sup\'{e}rieure de Lyon\\
Email: alexei.tsygvintsev@ens-lyon.fr
}

\bigskip
\begin{abstract}
In this paper, we further investigate the planar Newtonian three-body problem with a focus on collinear configurations, where either the three bodies or their velocities are aligned. We provide an independent proof of Montgomery's result \cite{M1}, stating that apart from the Lagrange's solution, all negative energy solutions to the zero angular momentum case result in syzygies, i.e., collinear configurations of positions. The concept of generalised syzygies, inclusive of velocity alignments, was previously explored by the author for bounded solutions in \cite{AT1}. In this study, we broaden our scope to encompass negative energy cases and provide new bounds. Our methodology builds upon the elementary Sturm-Liouville theory and the Wintner-Conley “linear” form of the three-body problem, as previously explored in the works of Albouy and Chenciner \cite{V1,V2,V3}.
\end{abstract}

\keywords{   dynamical systems, celestial mechanics, three-body problem, syzygies}
\maketitle

\section{Introduction} 

We consider the motion of three points with masses $m_1$, $m_2$, and $m_3$, denoted $P_1$, $P_2$, and $P_3$, respectively, in the plane. These points have positions $(x_i, y_i)\in\mathbb{R}^2$, where $i=1,2,3$.

The Newtonian three-body problem \cite{W} involves finding the motion of three point masses under the influence of their mutual gravitational attraction.

The equations of  motion can be written in the complex compact form as follows  
\begin{equation} \label{PLC}
\ddot z_1=m_2 \frac{z_{21}}{| z_{21} |^3}- m_3 \frac{z_{13}}{| z_{13} |^3}, \quad \ddot z_2=m_3 \frac{z_{32}}{| z_{32} |^3}- m_1 \frac{z_{21}}{| z_{21} |^3}, \quad   \ddot z_3=m_1 \frac{z_{13}}{| z_{13} |^3}- m_2 \frac{z_{32}}{| z_{32} |^3}\,,
\end{equation} 
where $z_k= x_k+i y_k\in \mathbb{C}$, $k=1,2,3$ and $ z_{kl}=z_k-z_l$.
We  assume  that the total linear momentum  is  zero:
\begin{equation} \label{sums}
\sum\limits _km_k \dot z_k=\sum\limits _km_k z_k=0\,, 
\end{equation}
by placing  the centre of mass  at the origin of the  coordinate system.

The word {\it syzygy} ( from Late Latin s$\mathrm{   \bar{y}}   $zygia =“conjunction” )  historically been used by astronomers to describe the alignment of celestial bodies, and in this context, it refers to a configuration where all three points lie on a straight line.  In general, a solution of the $N$-body problem is said to have a syzygy at  $t=t_0$ if at that moment all bodies belong to a certain straight line.     

Let $ t \mapsto  z_i(t)$, $i=1,2,3$ be any solution of the  equations \eqref{PLC} defined  for  $t\in I =[0, a)$, $a>0$. In our work \cite{AT1} we proposed to study the natural generalisation of syzygies by adding the 
supplementary condition of collinearity of velocities.

\begin{definition} \label{D}
The three bodies $P_1,P_2,P_3$ form a generalised syzygy  at the moment $t_0\in I$  if  at least one of the complex triplets $ (z_1,z_2,z_3)(t_0)$ (positions)  or   $ (\dot z_1,\dot z_2,\dot z_3)(t_0)$ (velocities)  belongs to the same straight line passing through the origin (see Figure \ref{fpicm1}).
\end{definition}

\vspace*{-15mm}
 \begin{figure}[htbp] 
 \includegraphics[width=0.8\linewidth ]{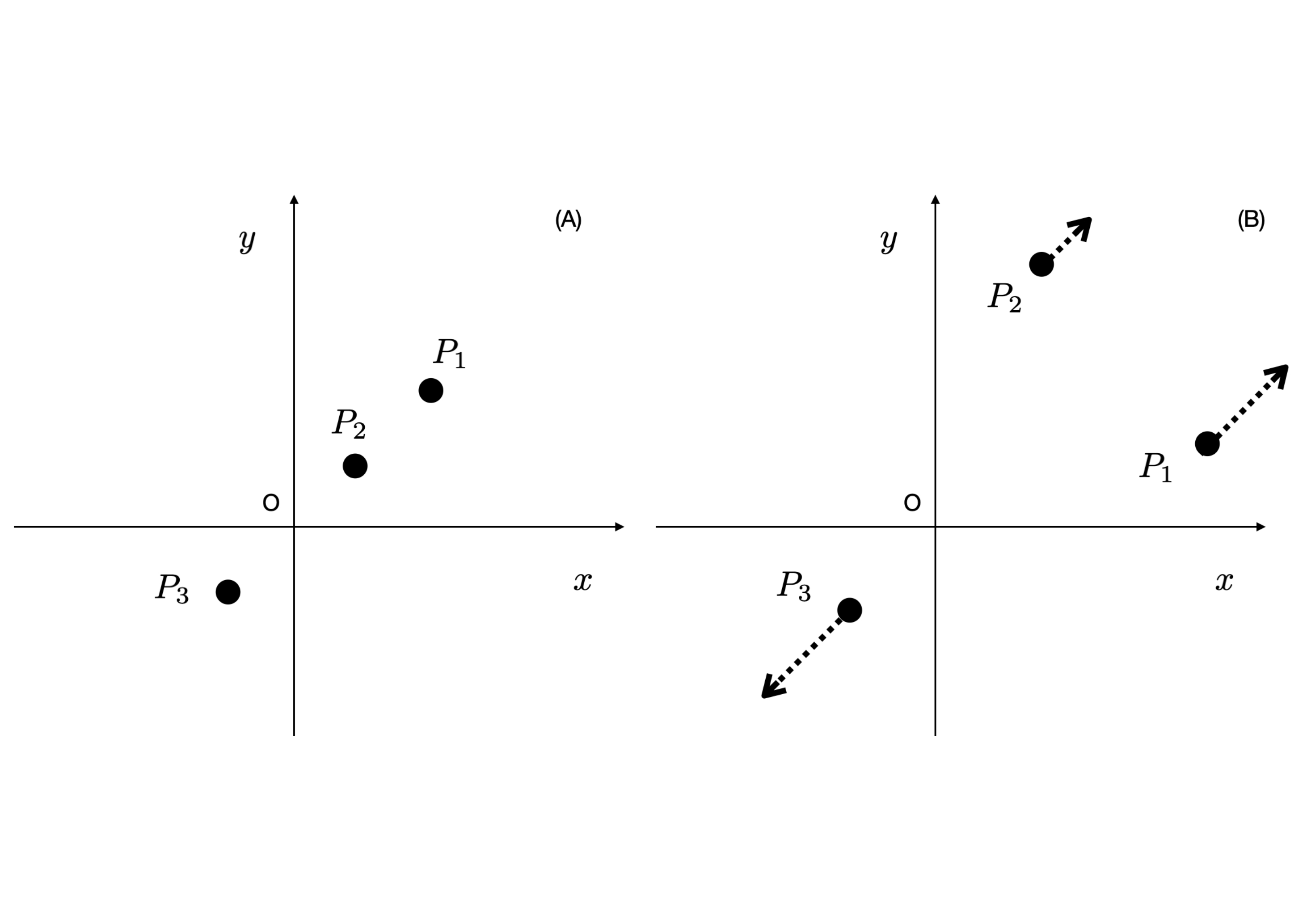}
 \vspace*{-15mm}
 \caption{ A  generalised syzygy: (A) -- the positions are collinear (eclipse), (B) -- the velocities (dashed arrows)  are collinear }
 \label{fpicm1}
 \end{figure}

In his work \cite{M1}, Montgomery demonstrated that aside from the Lagrange solution, every solution to the Newtonian three-body problem with zero angular momentum and negative energy inevitably encounters a syzygy, defined as a collinear configuration of positions.  Non-syzygy solutions with nonzero angular momentum are still  relatively unexplored.   
Diacu demonstrated in \cite{Diacu} that the set of initial conditions leading to syzygy solutions in the planar three-body problem is non-empty and open. 

 In particular, the orbit of one particle crosses the line of the other two and can not be
tangent to this line in the transition point. Generalised syzygies may occur more frequently, as they include collinear configurations of velocities as well.

The paper is organised as follows.

In Chapter 2, we provide a new independent  proof of Montgomery's result \cite{M1} on the existence of syzygies in the zero angular momentum case. 
The general approach involves formulating a second-order “inear” matrix equation $\ddot X = A X$ (see \cite{V1,V2,V3} for more detail), where the matrix $X$ characterises the configuration. The term “linear” is used since the matrix A also depends on the configuration. However, despite this dependence, we can draw certain conclusions based on the general properties of $A$.

In Chapter 3, we impose a certain algebraic restriction on the mutual distances between three bodies. In the case of periodic behaviour, this restriction guarantees the existence of a syzygy.

Finally, in Chapter 4, we present a simple algebraic condition that defines an open set of initial conditions leading to a generalised syzygy, which holds for arbitrary angular momentum.

\section {Zero Angular Momentum  Case }

Let  $\Gamma\,:\,  t\mapsto  (z_1(t),z_2(t),z_3(t))$ be a   zero angular momentum solution with negative energy  of the three-body problem \eqref{PLC}. 
 In this section we prove that if $\Gamma$  is collision free for a sufficiently long period of time, it will always encounter  a syzygy. 
A similar result was first established by Montgomery in \cite{M1} using the shape sphere approach. Our proof is essentially algebraic and is more elementary.

Before proceeding further, we need to establish some preliminary results.

After introducing  the new variables $w_i=m_i z_i$, $i=1,2,3$  the  relations  \eqref{sums}  yield
\begin{equation}  \label{bnb}
 \sum \limits _i   w_i= \sum\limits _i \dot w_i=0.
\end{equation}
Writing 
\begin{equation} \label{1234}
w_k=X_k+iY_k, \quad  X_k=m_k x_k, \quad   Y_k=m_k y_k,  \quad k=1,2,3\,, 
\end{equation}
and using  \eqref{bnb}, one derives from the equations \eqref{PLC} the following $2\times 2$  matrix  equation 
\begin{equation} \label{EQ}
\ddot X= AX, \quad X=
\left [
\begin{array}{ll}
X_1 & Y_1 \\
X_2 & Y_2
\end{array}
\right ], \quad  
A=\left [ 
\begin{array}{lllll}
-m_2 \rho_3 -m_{13} \rho_2 &  & m_1\rho_{32} \\
m_2 \rho_{31}&  & -m_1\rho_3 -m_{32} \rho_1
\end{array} 
\right]\,,
\end{equation} 
where 
\begin{equation}
\rho_1=1/ |z_{32}|^3,  \quad  \rho_2=1/ |z_{13}|^3,  \quad \rho_3=1/ |z_{21}|^3,  \quad m_{ij}=m_i+m_j,  \quad \rho_{ij}=\rho_i-\rho_j\,.
\end{equation}

The matrix $A$ is related to the  Wintner-Conley endomorphism encoding the forces.  Its higher dimension version was investigated in \cite{V1,V2,V3}  in  the study of $n$-body configurations and  the same equation \eqref{EQ} was  used (up to reduction, transposition and some scaling factor).

\begin{theorem}  \label{T2}
Let   $ t\mapsto  (z_1(t),z_2(t),z_3(t))$, $t \in [0,T_1]$  be a zero angular momentum collision-free solution  to the three-body problem \eqref{PLC}  with  negative energy $H=-\alpha$, $\alpha>0$ where 
\begin{equation} \label{eqq}
T_1(\alpha)= \frac{\sqrt{2} \pi \Sigma}{\alpha^{3/2}}\,,
\end{equation}
and
\begin{equation} \label{FO}
\Sigma=\frac{(m_3 m_2)^{3/2}} { m_{32}^{1/2}  } +    \frac{(m_1 m_3)^{3/2}} { m_{13}^{1/2}  }   +    \frac{(m_2 m_1)^{3/2}} { m_{21}^{1/2}  }  \,.
\end{equation}
Then there exists $t_0  \in[0, T_1]$ such that the three bodies form a syzygy at the moment $t=t_0$.
\end{theorem}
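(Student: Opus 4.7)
The plan is to reduce the search for a syzygy to a scalar oscillation problem for $u(t) := \det X(t)$. Because the centre of mass is at the origin, the three bodies lie on a common line precisely when $u$ vanishes, so it suffices to show that $u$ has a zero in $[0, T_1]$.

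First I derive a Hill-type scalar equation for $u$. For any $2 \times 2$ real matrix $X$ one has $X^{\top} J X = (\det X)\,J$ with $J = \begin{pmatrix} 0 & 1 \\ -1 & 0 \end{pmatrix}$, and for any $2 \times 2$ real $A$ one has $A^{\top} J + J A = (\operatorname{tr} A)\,J$. Differentiating $X^{\top} J X$ twice and substituting $\ddot X = AX$ from \eqref{EQ} yields
\[
\ddot u = (\operatorname{tr} A)\,u + 2\det \dot X.
\]
A direct inspection of the entries of $A$ produces the positive function $Q(t) := -\operatorname{tr} A = m_{12}/|z_{21}|^3 + m_{13}/|z_{13}|^3 + m_{23}/|z_{32}|^3$, so the equation reads $\ddot u + Qu = 2\det \dot X$.

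Second, I obtain a uniform lower bound on $Q$ in terms of the energy. H\"older's inequality with exponents $(3,3,3)$ applied to $U = \sum_{i<j} m_i m_j / |z_{ij}|$---writing each summand as $(m_{ij}/|z_{ij}|^3)^{1/3} \cdot ((m_i m_j)^{3/2}/m_{ij}^{1/2})^{1/3} \cdot ((m_i m_j)^{3/2}/m_{ij}^{1/2})^{1/3}$---yields $U^3 \le Q\, \Sigma^2$. Since $T \ge 0$ and $H = T - U = -\alpha$ imply $U \ge \alpha$ throughout the motion, this gives the pointwise bound $Q(t) \ge \alpha^3/\Sigma^2 = 2c$ with $c := \alpha^3/(2\Sigma^2)$; by construction $\pi/\sqrt c = T_1$, so the target is exactly a one-half-period Sturm comparison.

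I would then combine $\ddot u + Qu = 2\det\dot X$ with the zero angular momentum relation $\sum_i (X_i \dot Y_i - Y_i \dot X_i)/m_i = 0$ to produce a Sturm inequality $\ddot u + c\,u \le 0$ on every maximal interval where $u$ has a fixed sign. Suppose for contradiction that $u > 0$ on $[0, T_1]$; the Wronskian $W(t) := \dot u(t) \sin(\sqrt c\,t) - u(t) \sqrt c \cos(\sqrt c\,t)$ then satisfies $\dot W = \sin(\sqrt c\,t)(\ddot u + cu) \le 0$ on $[0, T_1]$, whereas $W(T_1) - W(0) = \sqrt c\,(u(T_1) + u(0)) > 0$, a contradiction; the case $u < 0$ is analogous. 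The delicate step, and essentially the only substantive one, is the passage from the zero angular momentum hypothesis to the inequality $\ddot u + c\,u \le 0$: the forcing term $2\det \dot X$ has no definite sign a priori, and only the vanishing of the total angular momentum---used in conjunction with the specific cofactor structure of the Wintner--Conley matrix $A$---lets one absorb it into the $-Qu$ term with a controlled loss, the factor $\sqrt 2$ in the definition of $T_1$ quantifying exactly this loss relative to the pointwise bound $Q \ge 2c$.
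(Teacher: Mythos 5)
Your setup coincides with the paper's: the Hill equation $\ddot \Delta_1 = \mathrm{Tr}(A)\,\Delta_1 + 2\Delta_2$ (with $\Delta_1=\det X$, $\Delta_2=\det\dot X$) is exactly \eqref{eqdf}, and your H\"older derivation of $-\mathrm{Tr}(A)\ge \alpha^3/\Sigma^2$ is a clean alternative to the Lagrange-multiplier argument of Lemma \ref{lemmm1}; that part is correct and arguably nicer. The problem is the step you yourself flag as ``the only substantive one'': you never carry it out, and the inequality you propose to establish, $\ddot u + c\,u\le 0$ for $u=\Delta_1$, is not attainable. The most that zero angular momentum yields is the discriminant inequality $\dot\Delta_1^2-4\Delta_1\Delta_2\ge 0$, i.e.\ $\Delta_2\le \dot\Delta_1^2/(4\Delta_1)$ on $\{\Delta_1>0\}$, and this bound is not proportional to $\Delta_1$: whenever $\dot\Delta_1$ is large and $\Delta_1$ small, the forcing term $2\Delta_2$ can dominate $(Q-c)\,u$, so it cannot be absorbed pointwise and your Wronskian argument has nothing to run on.

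Two ingredients are missing. First, the proof that zero angular momentum forces $\dot\Delta_1^2-4\Delta_1\Delta_2\ge 0$: in the paper this is the entire chain \eqref{fff1}--\eqref{deltain} --- decompose the Wintner--Conley matrix as $A=\sum_i\rho_i A_i$, derive the identity $\dot X\tilde X-\tfrac{\dot\Delta_1}{2}I_2=\tfrac{b}{m_2}\tilde A_1-\tfrac{a}{m_1}\tilde A_2-k\tfrac{m_3}{2}J$, and check that at $k=0$ this traceless matrix has nonpositive determinant because the quadratic form \eqref{qform} is positive semidefinite. This is where the ``cofactor structure'' you allude to is actually used, and it is a genuine computation, not a remark. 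Second, the substitution $\delta=\sqrt{\Delta_1}$, which is what converts the uncontrollable forcing into the sign-definite correction $-(\dot\Delta_1^2-4\Delta_1\Delta_2)/(4\Delta_1^2)\le 0$ and yields $\ddot\delta\le -\bigl(\alpha^3/(2\Sigma^2)\bigr)\delta$. Note also that the factor $\sqrt2$ in $T_1$ arises from the halving $\mathrm{Tr}(A)\mapsto \mathrm{Tr}(A)/2$ produced by this square root, not from a loss incurred in absorbing $\det\dot X$. With these two pieces in place, your Sturm/Wronskian endgame goes through verbatim when applied to $\delta$ rather than to $u$.
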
 
\begin{proof}

Introducing $x =[X_1,X_2]^T$, $ y=[Y_1,Y_2]^T$ and using the equation  \eqref{EQ}, we obtain  $\ddot x= Ax$,  $\ddot y=Ay$. Therefore,  $\ddot \Delta_1$ can be written as follows
\begin{equation} \label{l}
\ddot \Delta_1= \det([Ax,y])+\det([x,Ay])+2\det([\dot x, \dot y])=\det([Ax,y])+\det([x,Ay])+2\Delta_2\,.
\end{equation}
A simple algebraic computation shows that $\det([Ax,y])+\det([x,Ay])=\mathrm{Tr}(A)\Delta_1$.

Thus,  as follows from \eqref{l},
\begin{equation} \label{eqdf}
\ddot \Delta_1= \mathrm{Tr}(A)\Delta_1+ 2\Delta_2\,,
\end{equation} 
where
\begin{equation}
 \mathrm{Tr}(A)=-(m_{32}\rho_1+m_{13}\rho_2+m_{21}\rho_3)\,.
\end{equation}


\begin{lemma} \label{lemmm1}
Let $ t \mapsto  z_i(t)$, $i=1,2,3$  be  any solution of the three-body problem \eqref{PLC} with negative energy $H=-\alpha$, where $\alpha>0$. Then  $\mathrm{Tr}(A)\leq - \alpha^3/ \Sigma^2$ along this solution  with  $\Sigma$ defined  by  \eqref{FO}.
\end{lemma}
\begin{proof}
By introducing  $r_i= \rho_i^{1/3}$, $i=1,2,3$  the total energy of the three-body problem \eqref{PLC} can be written as
\begin{equation}
H= K-U(r), \quad  U(r)= m_3m_2 r_1+ m_1 m_3 r_2+ m_2m_1 r_3, \quad r=(r_1,r_2,r_3)\,,
\end{equation}
where $K\geq 0$ is the kinetic energy.

Since $K-U=-\alpha$ and $K\geq 0$,  one obtains $U\geq \alpha$.

Writing $F(r)=-\mathrm{Tr}(A)(r)= m_{32} r_1^3+m_{13} r_2^3+m_{21} r_3^3$, we shall  calculate, for any $s>0$,  the minimum of  the function $r \mapsto F(r)$ on the compact set
\begin{equation}
K_s=\{  r \in \mathbb R^3 \, | \,  U(r)=s,  r_i\geq 0, i=1,2,3\}\,.
\end{equation}
Because  $K_s$ is   convex (triangle)   and $F$ is a convex function on $K_s$ (since $d^2 F\geq 0$ on $K$),  it is sufficient to determine its  local minimum.  
One  computes: 
\begin{equation} 
\nabla U=( m_3 m_2, m_1 m_3, m_2 m_1), \quad  \nabla F=3 ( m_{32} r_1^2, m_{13} r_2^2, m_{21} r_3^2)\,.
\end{equation} 
The Lagrange  multiplier $\lambda$ found from the equations $ \nabla F= \lambda\,  \nabla U$, $U=s$ is given by 
\begin{equation}
\lambda=  \frac{3 s^2}{\Sigma^2}\,,
\end{equation} 
with $\Sigma$  defined   in \eqref{FO}.

The corresponding extremum point  is 
\begin{equation} 
r^*=(r_1^*, r_2^*, r_3^*)= \frac{s}{\Sigma} \left( \sqrt{ \frac{ m_3m_2 }{ m_{32}}},  \sqrt{\frac{m_1m_3}{ m_{13}}}, \sqrt{\frac{m_2 m_1}{ m_{21}}} \right)\in K_s\,,
\end{equation} 
which is a local minimum of $F$ because $d^2F(r^*)$ is  positive definite  and $d^2U=0$.
Thus, by substitution:
\begin{equation}
\min _{r \in K_s}  \,  F(r) =  F(r^*)=\frac{s^3}{\Sigma^2}\,.
\end{equation} 
Hence, considering  $s\geq \alpha$,   one shows that $U(r)\geq \alpha$ implies $ F (r)=- \mathrm{Tr}(A)(r) \geq \displaystyle \frac{\alpha^3}{ \Sigma^2}$. The proof of the Lemma \ref{lemmm1} is finished. 
\end{proof} 


The function $t\mapsto C(t)=\dot X X^{-1} $ satisfies, using \eqref{EQ},   the following matrix Riccati equation 
\begin{equation}\label{RIC}
\dot C+ C^2=A\,.
\end{equation} 
Combined with the Cayley-Hamilton identity
\begin{equation} 
 C^2-\mathrm{Tr}(C) C+\det(C)I_2=0\,,
 \end{equation}
 the equation \eqref{RIC} yields 
\begin{equation} \label{aq}
\dot C+ \mathrm{Tr}(C)C=A+\det(C)I_2 \,.
\end{equation}
Applying Liouville's formula to the equation $\dot X = CX$, we obtain:
\begin{equation} \label{Trace1}
 \dot \Delta_1=  \mathrm{Tr}(C)\Delta_1\,.
\end{equation}

Multiplying both sides of equation \eqref{aq} by $\Delta_1$, we get
\begin{equation} \label{xxt}
\frac{d}{dt} \left(  \Delta_1 C  \right)= \Delta_1 A+ \Delta_1 \det(C) I_2= \Delta_1 A+ \Delta_2 I_2\,,
\end{equation} 
where we have  used  $\det(C)=\Delta_2 / \Delta_1$. 

Introducing the adjugate matrix  $\tilde X$ of $X$ and using $ C=\dot X X^{-1}$,  we compute

\begin{equation} \label{Trace2}
\Delta_1 C =\dot X \tilde X=
\left [
\begin{array}{lll}
\dot X_1 & \dot Y_1 \\
\dot X_2 & \dot Y_2
\end{array}
\right ] \, \left [
\begin{array}{rrr}
Y_2 & -Y_1  \\
-X_2 & X_1
\end{array}
\right ]\,.
\end{equation}

The matrix  $A(\rho_1,\rho_2,\rho_3)$, defined  in \eqref{EQ} can be written as a linear combination 
\begin{equation} \label{fff1}
A=\rho_1A_1+\rho_2 A_2+\rho_3 A_3 \,,
\end{equation}
where $A_1$, $A_2$, and $A_3$ are $2\times 2$ constant matrices that depend on the masses and are defined by
\begin{equation} \label{M3}
A_1=\left [
\begin{array}{rrr}
0 & 0 \\
-m_2 & -m_{32}
\end{array}
\right ] , \, A_2=\left [
\begin{array}{rrr}
-m_{13} & -m_1 \\
0 & 0
\end{array}
\right ], \,A_3=\left [
\begin{array}{rr}
-m_2 & m_1 \\
m_2& -m_1
\end{array}
\right ]\,.
\end{equation}

Let $E = M_3(\mathbb R)$ be the Euclidean space of real $2\times 2$ matrices equipped with the inner product $<\,, \,>$ defined by $<A,B> = \mathrm{Tr}(A^TB)$ for $A,B\in E$.

We check that  $A_1,A_2,A_3 \in E$ are linearly independent  and span a $3$-dimensional vector subspace of   $E$. Furthermore, we have the identity
\begin{equation} \label{fff2}
-\frac{A_1+A_2+A_3}{M}=  I_2, \quad M=\sum m_i\,.
\end{equation}

Combining   equations \eqref{fff1},  \eqref{fff2}  and  \eqref{xxt}, we obtain
\begin{equation} \label{meqs}
\frac{d}{dt} (\dot X \tilde X)=\left (\Delta_1 \rho_1- \frac{\Delta_2}{M} \right)A_1+\left (\Delta_1 \rho_2- \frac{\Delta_2}{M}\right)A_2+\left (\Delta_1 \rho_3- \frac{\Delta_2}{M}\right)A_3\,.
\end{equation}

Let us check, with help of  the equation \eqref{meqs},  constancy of the angular momentum.  We introduce the matrix 
\begin{equation}
L=\left [
\begin{array}{lrr}
- m_3^{-1} & m_1^{-1}+m_3^{-1} \\ \\
-m_2^{-1} -m_3^{-1} & m_3^{-1}
\end{array}
\right ] \,.
\end{equation} 

It can be  checked that  $ <L,A_i>=0$, $i=1,2,3$. 

Taking the inner product of both sides of the equation \eqref{meqs} with $L$, we find that
\begin{equation}
\frac{d}{dt} \left < (\dot X \tilde X), L      \right>=0\,,
\end{equation} 
i.e. the function  $I=\left < \dot X \tilde X, L  \right> =const$ is time independent. 

One can easily verify   that $I$, written as a function  of variables $X_i$,  $Y_j$, $ 1\leq i,j \leq 2$ (by excluding $(X_3,Y_3)$ with help of \eqref{bnb})   is the angular  momentum of the three-body problem \eqref{PLC} :
\begin{equation}
I= \sum_{i=1}^3 m_i \, R_i \times \dot R_i= \sum_{i=1}^3 \frac{S_i}{m_i}\,,
\end{equation} 
where $R_i=(x_i,y_i)$, $S_i=X_i\dot Y_i- \dot X_i Y_i$,  $ 1\leq i \leq 3$.

The equation \eqref{xxt} can be written, by replacing $\Delta_2$ using equation \eqref{eqdf}, in the following form
\begin{equation}
\frac{d} { dt} \left ( \dot X\tilde X- \frac{\dot \Delta_1}{2} I_2 \right ) = \Delta_1 \left (A- \frac{\mathrm{Tr}(A)}{2} I_2\right)\,. 
\end{equation} 

Applying \eqref{fff1}, we can transform it to
\begin{equation} \label{MMM3}
\frac{d} { dt} \left ( \dot X\tilde X- \frac{\dot \Delta_1}{2} I_2 \right )= \Delta_1 \sum_{i=1}^3 \rho_i \tilde A_i\,,
\end{equation}
where $\tilde A_i= A_i- \displaystyle \frac{\mathrm{Tr}(A_i)}{2} I_2$,  $ \mathrm{Tr}(\tilde A_i)=0$,   $1\leq i \leq 3$.

It can be shown, using  \eqref{M3},   \eqref{fff2},  that  $\tilde A_1+\tilde A_2+\tilde A_3=0$.  Substituting  $\tilde A_3=-\tilde A_1-\tilde A_2$ in \eqref{MMM3} we obtain finally 
\begin{equation} \label{finalreduced}
\frac{d} { dt} \left ( \dot X\tilde X- \frac{\dot \Delta_1}{2} I_2 \right )=\Delta_1 (\rho_1-\rho_3) \tilde A_1+\Delta_1 (\rho_2-\rho_3) \tilde A_2\,,
\end{equation}
where
\begin{equation} 
\tilde A_1=\left [
\begin{array}{rrr}
m_{32}/2 & 0 \\
-m_2 & -m_{32}/2
\end{array}
\right ] , \, \tilde A_2=\left [
\begin{array}{rrr}
-m_{13}/2 & -m_1 \\
0 & m_{13}/2
\end{array}
\right ], \quad m_{ij}=m_i+m_j\,.
\end{equation}

Moreover, as follows from  \eqref{Trace1}, \eqref{Trace2}, one always has
\begin{equation} \label{trdet}
\mathrm{Tr}(\dot X  \tilde X) =\dot \Delta_1, \quad  \mathrm{det}(\dot X \tilde  X)= \Delta_1 \Delta_2\,.
\end{equation}
Let $I=\left < \dot X \tilde X, L  \right> =k$  where $k\in \mathbb R$ is the constant value of the angular momentum. 
Computing $\left  < \dot X \tilde X, L  \right> $  using  \eqref{Trace2}, it is easy to derive the following  equality:  
\begin{equation} \label{form}
\dot X\tilde X- \frac{\dot \Delta_1}{2} I_2= \frac{b}{m_2} \tilde A_1- \frac{a}{m_1} \tilde A_2- k\frac{m_3 }{2} J\,,
\end{equation}
where
\begin{equation}
a=
\left |
\begin{array}{lll}
X_1 & Y_1\\
\dot X_1 & \dot Y_1
\end{array}
\right |, \quad b= \left |
\begin{array}{lll}
X_2 & Y_2\\
\dot X_2 & \dot Y_2
\end{array}
\right |, \quad J=\left [
\begin{array}{rrr}
1 & 0\\
0 & -1
\end{array}
\right ]\,.
\end{equation}

For  $k=0$, the formula \eqref{form} is given by 
\begin{equation} \label{longer}
\dot X\tilde X- \frac{\dot \Delta_1}{2} I_2= \frac{b}{m_2} \tilde A_1- \frac{a}{m_1} \tilde A_2=
\left [
\begin{array}{rrr}
\beta a+ \gamma b &  a\\
-b  &- \beta a -\gamma b 
\end{array}
\right ]=R
\,,
\end{equation}
where
\begin{equation} \label{alphab}
 \beta=\frac{1}{2}  \left (  \frac{m_3}{m_1}+1  \right) , \quad \gamma=\frac{1}{2} \left (  \frac{m_3}{m_2}+1  \right)  \,,
\end{equation} 
and
\begin{equation} \label{qform}
-\mathrm{det}(R)=\beta^2 a^2 +(2 \beta \gamma -1)ab+\gamma^2 b^2\,.
\end{equation}
As follows from \eqref{alphab}, one always has  $\beta, \gamma >1/2$. Using these inequalities, it is easy to show  that the quadratic form  in  \eqref{qform}, as a function of  $a$ and $b$,  is positive  and therefore 
\begin{equation}
\mathrm{det}(R)(a,b)\leq 0, \quad \forall \, a,b \in \mathbb{R} \,.
\end{equation}
Since $\mathrm{Tr}(R)=0$, the eigenvalues of $R$ are real.  Hence, according to \eqref{longer}, the eigenvalues $\lambda_1$, $\lambda_2$  of $\dot X \tilde X$ are also real.
Thus, using  \eqref{trdet},  we have the following inequality holding for arbitrary $a$, $b\in \R$
\begin{equation} \label{deltain}
\mathrm{Tr}(\dot X  \tilde X)^2-4 \,  \mathrm{det}(\dot X \tilde  X)     =\dot \Delta_1^2 -4 \Delta_1 \Delta_2\geq 0 \,.
\end{equation}
Suppose that the solution $ t\mapsto  (z_1(t),z_2(t),z_3(t))$  is syzygy free in the interval $[0,T_1]$. In particular, we can suppose 
\begin{equation}\label{hypot}
\Delta_1(t)>0, \quad   \forall t\in [0,T_1]\,,
\end{equation}
so  that the function $\delta :\, t  \mapsto  \sqrt{   \Delta_1(t)  }$  is differentiable in this interval. 

Differentiating $\delta$ twice  using  \eqref{eqdf}, we obtain
\begin{equation} \label{rt1}
\ddot \delta = \eta \delta, \quad \eta = \left(\frac {\mathrm{Tr}(A) }{2}- \frac{\dot \Delta_1^2-4\Delta_1 \Delta_2}{4\Delta_1^2}    \right), \quad \quad t\in [0,T_1] \,.
\end{equation}
According to \eqref{deltain} and Lemma  \ref{lemmm1}:
\begin{equation}
\eta(t)\leq \frac{\mathrm{Tr}(A) }{2}\leq -\zeta^2,  \quad \zeta^2= \frac{\alpha^3}{2\Sigma^2}, \quad t\in [0,T_1]\,.
\end{equation}

As follows from the zero comparison theorem of the Sturm-Liouville theory  \cite{C}, the solution  $\delta$  of \eqref{rt1}  always has a zero  between any two consecutive zeros of any  solution $y$ of the  equation $\ddot y =-\zeta^2 y$ whose general solution is  $y(t)=A \cos( \zeta t+\phi_0)$,   $A,\phi_0\in \mathbb{R}$.    For a  nonzero $A$,   every two consecutive zeros  of  $y$  are separated by an interval of the length $\pi/\zeta$ . 
Hence,  $\delta(t_0)=\Delta_1(t_0)=0$ for some  $0<t_0< T_1=\pi/\zeta$ with $T_1$  given by the formula \eqref{eqq}. 
This contradicts our hypothesis  \eqref{hypot}. The proof of Theorem  \ref{T2} is finished.

\end{proof}

\section{ Existence of syzygies for periodic solutions }

In this section we will show that any periodic solution of the three-body problem will have a syzygy, i.e. exhibits a collinear configuration of the bodies  as soon as some geometrical constraints are imposed on the shape of the triangle formed by the bodies. Not every periodic solution has a syzygy; for example, the Lagrange periodic solution has the three bodies situated at the vertices of an equilateral triangle and never aligned.
 \begin{definition} \label{DEF22}
 A periodic solution of the three-body  problem \eqref{PLC}  is  called $\theta$-rigid  if there exists a non-zero vector  $\theta=(\theta_1,\theta_2,\theta_3)\in \mathbb R^3$   such that
 \begin{equation} \label{theta}
 \theta_1( \rho_3(t)- \rho_2(t) )  +\theta_2 (\rho_1(t)-\rho_3(t)) +\theta_3 (\rho_2(t)- \rho_1(t)) \geq 0, \quad  \forall \, t\in \mathbb R\,,
  \end{equation} 
 where the sum is strictly positive for at least one  $t_0\in \mathbb R$.
   \end{definition} 

For instance, let us consider a periodic solution   for which 
\begin{equation} \label{ineq23}
|z_{32}(t)|>|z_{13}(t)|  \Leftrightarrow \rho_1(t) < \rho_2(t), \quad \forall t \in \mathbb R\,.
\end{equation}
Thus, it is $\theta$-rigid with respect to the vector $(\theta_1,\theta_2,\theta_3)=(0,0,1)$.   

An example of this is Euler's collinear periodic solution, in which all bodies are perpetually collinear  and each body describes an elliptical orbit, implying that a syzygy occurs at every moment.
 
 We note that the  Lagrange equilateral solution is not $\theta$-rigid for any choice of $\theta$. Indeed, in this case $\rho_1=\rho_2=\rho_3$ and, therefore, the sum in \eqref{theta} is always zero.

One might ask whether it is possible to have a periodic solution without syzygies, in which one side of the triangle is always smaller than the other and thus satisfies one of the conditions of the form \eqref{ineq23}.

Our next theorem provides a negative response to this question.

\begin{theorem} \label{Theorem2}
Every $\theta$-rigid periodic solution to the three-body problem \eqref{PLC}  admits  a syzygy.
\end{theorem}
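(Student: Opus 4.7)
My plan is to argue by contradiction, assuming the periodic solution is syzygy-free, and extract an integral identity over one period that is incompatible with the pointwise inequality defining $\theta$-rigidity. The starting point is equation \eqref{MMM3} from the proof of Theorem \ref{T2},
$$\frac{d}{dt}\Bigl(\dot X\tilde X - \tfrac{\dot\Delta_1}{2} I_2\Bigr) = \Delta_1 \sum_{i=1}^3 \rho_i \tilde A_i,$$
together with the relations $\mathrm{Tr}(\tilde A_i)=0$ and $\tilde A_1+\tilde A_2+\tilde A_3 = 0$. Note that the left-hand side is a periodic matrix-valued function of $t$, so its integral over a period vanishes.

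Suppose for contradiction that $\Delta_1(t)\neq 0$ for all $t\in\mathbb R$. Since $\Delta_1$ vanishes exactly on syzygies and is continuous, it has constant (nonzero) sign along the periodic orbit. Given a rigidity vector $\theta$, introduce $c_1=\theta_2-\theta_3$, $c_2=\theta_3-\theta_1$, $c_3=\theta_1-\theta_2$; a direct expansion of \eqref{theta} shows
$$\theta_1(\rho_3-\rho_2)+\theta_2(\rho_1-\rho_3)+\theta_3(\rho_2-\rho_1) = c_1\rho_1+c_2\rho_2+c_3\rho_3,$$
so by Definition \ref{DEF22} this function is nonnegative and strictly positive at some $t_0$. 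In particular $(c_1,c_2,c_3)\neq 0$, because otherwise the function would be identically zero, contradicting the strict inequality at $t_0$. Observe also that $c_1+c_2+c_3=0$.

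The key step is to pick a test matrix $N\in M_2(\mathbb R)$ satisfying $\langle N,\tilde A_i\rangle = c_i$ for $i=1,2,3$. Since $\tilde A_1,\tilde A_2$ are linearly independent in the four-dimensional space $M_2(\mathbb R)$, the two prescriptions $\langle N,\tilde A_1\rangle=c_1$, $\langle N,\tilde A_2\rangle=c_2$ can always be met; the third condition is automatic from $\tilde A_1+\tilde A_2+\tilde A_3=0$ and $c_1+c_2+c_3=0$. Taking the inner product of \eqref{MMM3} with this $N$ gives
$$\frac{d}{dt}\Bigl\langle \dot X\tilde X - \tfrac{\dot\Delta_1}{2} I_2,\, N\Bigr\rangle = \Delta_1(t)\,[c_1\rho_1(t)+c_2\rho_2(t)+c_3\rho_3(t)].$$
Integrating over one period $T$, the left-hand side is zero by periodicity, while the right-hand side is the integral of a continuous function of constant sign (the sign of $\Delta_1$) that is nonzero on a neighborhood of $t_0$. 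Hence the right-hand side is strictly nonzero, a contradiction.

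The main obstacle, conceptually, is recognizing that the definition of $\theta$-rigidity is tailored precisely to the linear relation $\tilde A_1+\tilde A_2+\tilde A_3=0$: the coefficients $(c_1,c_2,c_3)$ automatically lie in the plane $c_1+c_2+c_3=0$, which is exactly the compatibility condition needed for the test matrix $N$ to exist by a two-dimensional count inside $M_2(\mathbb R)$. Once this is spotted, the proof reduces to a one-line periodicity argument based on \eqref{MMM3}.
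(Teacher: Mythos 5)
Your proof is correct, and its overall skeleton is the same as the paper's: assume no syzygy, note $\Delta_1$ has constant sign, produce an exact time-derivative whose value is $\Delta_1$ times the rigidity combination $S=\theta_1(\rho_3-\rho_2)+\theta_2(\rho_1-\rho_3)+\theta_3(\rho_2-\rho_1)$, and integrate over a period to contradict $\int_0^\tau \Delta_1 S\,dt=0$. The difference is purely in how the exact derivative is manufactured. The paper works with the three oriented areas $S_i=X_i\dot Y_i-\dot X_iY_i$ and derives the scalar system \eqref{systemof3}, from which $\frac{d}{dt}\sum_i\theta_iS_i/m_i=\Delta_1 S$ is immediate (equation \eqref{INT}); you instead recycle the matrix identity \eqref{MMM3} from the proof of Theorem \ref{T2} and pair it with a test matrix $N$ chosen so that $\langle N,\tilde A_i\rangle=c_i$ with $c_1=\theta_2-\theta_3$, $c_2=\theta_3-\theta_1$, $c_3=\theta_1-\theta_2$. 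The two constructions are equivalent: by \eqref{form} the matrix $\dot X\tilde X-\tfrac{\dot\Delta_1}{2}I_2$ encodes exactly $S_1$, $S_2$ and the angular momentum, and since $\sum_iS_i/m_i$ is the (constant) angular momentum, your functional $\langle\dot X\tilde X-\tfrac{\dot\Delta_1}{2}I_2,N\rangle$ differs from the paper's $\sum_i\theta_iS_i/m_i$ only by a constant. What your route buys is a clean conceptual point the paper leaves implicit: the compatibility condition $c_1+c_2+c_3=0$ for the existence of $N$ is exactly the relation $\tilde A_1+\tilde A_2+\tilde A_3=0$, i.e.\ the definition of $\theta$-rigidity is precisely dual to the degeneracy of the Wintner--Conley decomposition. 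What the paper's route buys is economy: it needs only the elementary identities for $\dot S_i$ and no linear algebra in $M_2(\mathbb R)$.
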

\begin{proof}
Let  $t\mapsto z_i(t)$, $i=1,2,3$  be a periodic solution of the three-body problem \eqref{PLC}  with period $\tau>0$.  

Let us suppose that it is $\theta$-rigid  and has no syzygies i.e. $\Delta_1(t)=\det(X(t)) \neq 0$, $\forall t \in [0,\tau]$.  Without loss of generality, we can assume that 
\begin{equation} \label{sigma}
\Delta_1(t) >0, \quad  \forall t \in [0,\tau]\,.
\end{equation}
Let $S_i=X_i\dot Y_i- \dot X_i Y_i$, $i=1,2,3$  be  three oriented areas of parallelograms formed by the vectors $(X_i,Y_i)$ and $(\dot X_i,\dot Y_i)$. Then, using \eqref{EQ} and  \eqref{bnb}, one derives the following equations
\begin{equation} \label{systemof3}
\dot S_1=m_1 \Delta_1 ( \rho_3-\rho_2),  \quad  \dot S_2=m_2 \Delta_1 ( \rho_1-\rho_3), \quad \dot S_3=m_3 \Delta_1 ( \rho_2-\rho_1)\,.
\end{equation}
Thus, 
\begin{equation} \label{INT}
\frac{d} {dt} \left ( \sum _{i=1}^3 \frac{\theta_i  S_i}{m_i}   \right)= \Delta_1 S , \quad S=   \theta_1( \rho_3- \rho_2 )  +\theta_2 (\rho_1-\rho_3) +\theta_3 (\rho_2- \rho_1) , \quad  t \in [0, \tau]\,.
\end{equation}
Integrating  \eqref{INT} and using the periodicity of $S_i$, $i=1,2,3$ we find
\begin{equation}
\int _0 ^{\tau} \Delta_1 S \, dt=0\,,
\end{equation}
which obviously contradicts  \eqref{sigma} and that $S(t_0)>0$ for some $t_0\in [0,\tau]$.  The proof is finished.
\end{proof}

As pointed out by Richard  Montgomery to the author, in the case of the $\theta=(0,0,1)$ rigid solution, the corresponding orbit, when mapped to the shape sphere \cite{M2}, belongs to the half-sphere bounded by the isosceles circle $ |z_{32}| = |z_{13}|$. According to Theorem \ref{Theorem2}, it should thus always intersect the collinear plane of the shape space.

\section{Sufficient condition for the  existence of  generalised syzygies}

In this section, we revisit a simple geometric condition -- first introduced in our previous study \cite{AT1} -- which, based on initial positions and velocities, guarantees the occurrence of generalised syzygies (see Definition \ref{D}).

 \begin{definition} \label{D1}
The configuration of  the bodies $P_1$, $P_2$ and  $P_3$ at the moment $t_0\in I$  is called  {\it antisymmetric},  if the oriented areas of parallelograms  spanned by the vectors 
$(z_j(t_0),z_k(t_0))$ and $(\dot z_j(t_0), \dot z_k(t_0))$ are  nonzero and have  opposite signs for some $j\neq k$ (see Figure \ref{fpic0}).   This condition is equivalent algebraically to
\begin{equation} \label{cond}
 \left  |
\begin{array}{lll}
x_j &  y_j\\
x_k & y_k
\end{array}
\right|(t_0)  \cdot  \left  |
\begin{array}{lll}
\dot x_j &  \dot y_j\\
\dot x_k & \dot y_k
\end{array}
\right|(t_0)= (x_j y_k -y_j x_k)(t_0) \cdot (\dot x_j \dot y_k -\dot y_j \dot x_k)(t_0)<0.
\end{equation}
\end{definition} 

\begin{figure}[htbp] 
 \includegraphics[width=0.4\linewidth]{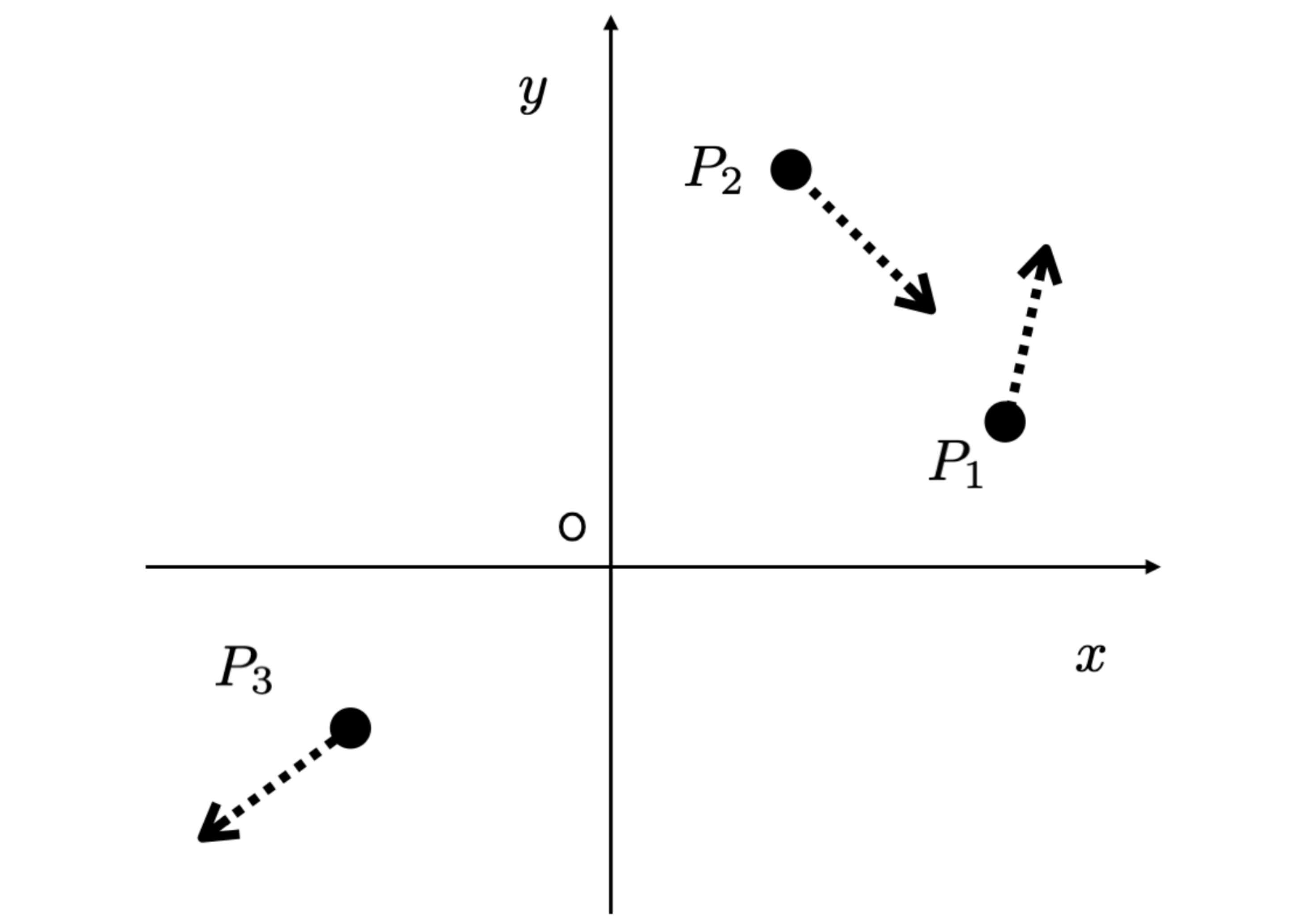}
\vspace*{-3mm}
 \caption{ Antisymmetric configuration of three bodies}
 \label{fpic0}
 \end{figure}
\begin{proposition}\label{R}
The  property of being antisymmetric is independent of  the choice of the pair of the bodies: once verified for a particular pair of  bodies $P_j$, $P_k$, $j\neq k$,
the condition \eqref{cond} will be also satisfied for all other possible  choices of pairs $P_n$, $P_m$, $n\neq m$.
\end{proposition}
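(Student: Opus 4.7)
The plan is to reduce all three pair-determinants to multiples of a single one by exploiting the two center-of-mass constraints \eqref{sums}, which say that the triples $(z_1,z_2,z_3)$ and $(\dot z_1,\dot z_2,\dot z_3)$ satisfy the same mass-weighted linear relation $\sum m_i z_i = \sum m_i \dot z_i = 0$. Because the same linear dependence governs both positions and velocities, every pair-determinant involving body $3$ will differ from the pair-determinant $(1,2)$ by exactly the same mass-ratio factor, whether we form it from positions or from velocities. Squaring such a factor when we multiply the two determinants together then forces each product $a_{jk}\, b_{jk}$ to be a nonnegative multiple of the reference product $a_{12}\, b_{12}$, which is what we need.

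Concretely, I would set $a_{jk}=x_j y_k - y_j x_k$ and $b_{jk}=\dot x_j \dot y_k-\dot y_j \dot x_k$, solve \eqref{sums} for the components of body $3$ (e.g.\ $x_3=-(m_1 x_1+m_2 x_2)/m_3$ and similarly for $y_3$, $\dot x_3$, $\dot y_3$), and substitute into $a_{13}$, $a_{23}$, $b_{13}$, $b_{23}$. The computation collapses because the ``diagonal'' terms $x_j y_j$ cancel, leaving
\[
a_{13}=-\frac{m_2}{m_3}\, a_{12},\qquad a_{23}=\frac{m_1}{m_3}\, a_{12},
\]
and the identical relations $b_{13}=-(m_2/m_3)\, b_{12}$, $b_{23}=(m_1/m_3)\, b_{12}$ for the velocity-determinants, by exactly the same algebraic manipulation applied to the dotted variables.

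Consequently $a_{jk}\, b_{jk} = c_{jk}^{\,2}\, a_{12}\, b_{12}$ with $c_{12}=1$, $c_{13}=-m_2/m_3$, $c_{23}=m_1/m_3$, so the sign of $a_{jk}\, b_{jk}$ is the same for every pair $j\neq k$, and it vanishes for one pair iff it vanishes for all. In particular, the strict inequality \eqref{cond} for one choice of $(j,k)$ forces it for all other choices, which is the claim. I do not expect any real obstacle here — the only thing to watch carefully is the bookkeeping of signs in the substitution, since a sign error would destroy the coincidence between the multipliers for $a_{jk}$ and those for $b_{jk}$ that is the whole point of the argument.
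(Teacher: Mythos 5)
Your argument is correct and is essentially the paper's own proof: both use the center-of-mass relations \eqref{sums} to express body $3$'s data in terms of bodies $1$ and $2$, so that each pair-determinant product becomes a squared mass-ratio multiple of the $(1,2)$ product (the paper obtains the factor $m_1^2/m_3^2$ for the pair $(2,3)$ by row operations on the determinants, which is the same computation as your direct substitution). Your stated multipliers $a_{13}=-(m_2/m_3)a_{12}$ and $a_{23}=(m_1/m_3)a_{12}$ check out, and the squaring step correctly handles all sign issues, including the $j\leftrightarrow k$ swap.
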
 

\begin{proof}
We observe that the condition \eqref{cond} is invariant  under the  permutation of $j$ and $k$.  
Without loss of generality, we can assume that  \eqref{cond} holds for  $(j,k)=(1,2)$ and show  that it is true also for the case where $(j,k)=(2,3)$.  The proof for other cases is similar.

The product of two determinants in \eqref{cond}  for $j=2$, $k=3$ can be written as  
 \begin{equation} \label{formulas}
 \delta=
  \underbrace{
 \left  |
\begin{array}{lll}
x_2 &  y_2\\
x_3 & y_3
\end{array}
\right |
\cdot  \left  |
\begin{array}{lll}
\dot x_2 &  \dot y_2\\
\dot x_3 & \dot y_3
\end{array}
\right|} _a=\displaystyle  \frac{1}{ m_2^2 m_3^2}
\left|
\begin{array}{lll}
m_2 x_2 & m_2 y_2\\
m_3 x_3 &m_3  y_3
\end{array}
\right| \cdot \left  |
\begin{array}{lll}
m_2 \dot x_2 &  m_2 \dot y_2\\
m_3 \dot x_3 &m_3 \dot y_3
\end{array}
\right| \,.
\end{equation}

As can be seen from the equalities \eqref{sums}:
\begin{equation}\label{rt}
\begin{array}{llll}
[m_3 x_3, m_3 y_3]=- [m_1 x_1, m_1 y_1]-[m_2 x_2, m_2 y_2], \\ 

 [m_3 \dot x_3, m_3 \dot y_3]=- [m_1 \dot x_1, m_1 \dot y_1]-[m_2 \dot x_2, m_2 \dot y_2]\,.
\end{array}
\end{equation} 

Therefore, by using \eqref{formulas}, \eqref{rt}, and the elementary properties of determinants, we have:
\begin{equation} \label{r}
\delta = \displaystyle  \frac{1}{ m_2^2 m_3^2}
\left|
\begin{array}{lll}
m_2 x_2 & m_2 y_2\\
-m_1 x_1 &-m_1  y_1
\end{array}
\right| \cdot  \left  |
\begin{array}{lll}
m_2 \dot x_2 &  m_2 \dot y_2\\
-m_1 \dot x_1 &-m_1 \dot y_1
\end{array}
\right|= \displaystyle  \frac{m_1^2}{m_3^2}  \underbrace{\left  |
\begin{array}{lll}
x_1 &  y_1\\
x_2 & y_2
\end{array}
\right |
\cdot  \left  |
\begin{array}{lll}
\dot x_1 &  \dot y_1\\
\dot x_2 & \dot y_2
\end{array}
\right| }_b\end{equation}
Obviously, the sign of $a$ in  \eqref{formulas}  and $b$ in   \eqref{r} are the same. That finishes the proof.
\end{proof}

\begin{proposition}
Both  definitions  of a  generalised  syzygy   and an antisymmetric configuration,   are  invariant when  replacing $(x_i,y_i)$ by $(X_i,Y_i)$ and $(\dot x_i, \dot y_i)$ by $(\dot X_i, \dot Y_i)$ for $i=1,2,3$. 
\end{proposition}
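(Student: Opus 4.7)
The plan is to verify each of the two invariances separately, relying only on positivity of the masses $m_i>0$.

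For the generalised syzygy condition, I would recall that a triplet $(z_1,z_2,z_3)\in\mathbb{C}^3$ lies on a line through the origin precisely when there exists $\theta\in\mathbb{R}$ with $e^{-i\theta}z_k\in\mathbb{R}$ for $k=1,2,3$. Since $w_k=m_kz_k$ with $m_k>0$, one has $e^{-i\theta}w_k=m_k(e^{-i\theta}z_k)\in\mathbb{R}$ simultaneously for all $k$, so the same line through the origin serves for both triplets. The converse uses the identical argument with $m_k^{-1}>0$, and the proof for the velocity triplets $(\dot z_k)$ and $(\dot w_k)$ is word-for-word the same.

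For the antisymmetric condition \eqref{cond}, each of the two $2\times 2$ determinants uses only the rows of indices $j$ and $k$. By multilinearity of the determinant in its rows, the substitution $(x_i,y_i)\mapsto(X_i,Y_i)=(m_ix_i,m_iy_i)$ multiplies the position determinant by $m_jm_k$, and the analogous substitution on velocities multiplies the velocity determinant by $m_jm_k$. Hence the product appearing in \eqref{cond} is rescaled by the strictly positive factor $(m_jm_k)^2$, which preserves both its sign and its non-vanishing. Therefore \eqref{cond} for $(x_i,y_i),(\dot x_i,\dot y_i)$ is equivalent to \eqref{cond} for $(X_i,Y_i),(\dot X_i,\dot Y_i)$. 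Combined with Proposition \ref{R}, the resulting condition does not depend on which pair of bodies is chosen in either form of coordinates.

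I do not expect a substantive obstacle. Both invariances reduce to the elementary observation that positive real rescaling of each row preserves the property of lying on a real line through the origin in the complex plane, respectively multiplies a $2\times 2$ determinant by a positive factor. The only mild care needed is to phrase "line through the origin" in the rotated-reality form $e^{-i\theta}z_k\in\mathbb{R}$, which is manifestly stable under multiplication by positive reals.
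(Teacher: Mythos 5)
Your proposal is correct and follows essentially the same route as the paper: the paper's proof consists precisely of the observation that the product of the two $2\times 2$ determinants is rescaled by the positive factor $m_j^2m_k^2$ under the substitution, together with the remark that collinearity through the origin is immediate from $X_k=m_kx_k$, $Y_k=m_ky_k$. Your rotated-reality phrasing $e^{-i\theta}z_k\in\mathbb{R}$ merely spells out the latter point in more detail.
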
 
\begin{proof}
It follows  immediately from  \eqref{1234}  and the  identity below, verified  for any $j\neq k$: 

\begin{equation} 
\left |
\begin{array}{llll}
x_j  & y_j \\
x_k &  y_k
\end{array}
\right| \cdot 
\left |
\begin{array}{llll}
\dot x_j  & \dot y_j \\
\dot x_k &  \dot y_k
\end{array}
\right|=\frac{1}{ m_j^2 m_k^2}
\left |
\begin{array}{llll}
X_j  & Y_j \\
X_k &  Y_k
\end{array}
\right| \cdot 
\left |
\begin{array}{llll}
\dot X_j  & \dot Y_j \\
\dot X_k &  \dot Y_k
\end{array}
\right| \,.
\end{equation} 
\end{proof}

The author has shown in \cite{AT1} that the existence of a generalised syzygy can be guaranteed by the antisymmetry condition \eqref{cond}, provided that the mutual distances of the three bodies remain bounded.
The following theorem generalises this result to the case of negative energy, which includes unbounded (escape) solutions in particular. Moreover, the proof presented here is simpler and more straightforward.

\begin{theorem} \label{T} 
Let   $ t\mapsto  (z_1(t),z_2(t),z_3(t))$  be a solution of the three-body problem \eqref{PLC}  with negative energy $H=-\alpha$, $\alpha>0$. We assume that the initial  configuration of the bodies $P_1$, $P_2$,  $P_3$ at $t=0$ is antisymmetric 
and the solution is collision free for $t\in [0,T]$ where 
\begin{equation}   \label{FO1}
T(\alpha)=\frac{   \pi   \Sigma } {    \alpha^{3/2}  } \,,
\end{equation}
and $\Sigma$ is defined by \eqref{FO}. 
Then there exists $t\in [0, T]$ such that the three bodies attain a generalised syzygy at time  $t=t_0$.
\end{theorem}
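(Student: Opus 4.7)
The plan is to argue by contradiction, following the same Sturm--Liouville strategy used in Theorem \ref{T2}, but now exploiting the antisymmetry hypothesis to sharpen the bound on the interval length by a factor of $\sqrt{2}$. The first step is to rewrite the hypothesis in $(X,Y)$--coordinates: by Proposition \ref{R} I may evaluate \eqref{cond} on the pair $(j,k)=(1,2)$, and the scaling identity of the preceding proposition then gives
\begin{equation*}
\Delta_1(0)\,\Delta_2(0) \;=\; m_1^2 m_2^2\,(x_1 y_2 - y_1 x_2)(0)\,(\dot x_1 \dot y_2 - \dot y_1 \dot x_2)(0) \;<\; 0,
\end{equation*}
so $\Delta_1(0)$ and $\Delta_2(0)$ are nonzero and of opposite sign.

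Next, assume toward a contradiction that no generalised syzygy occurs on $[0,T]$. Then $\Delta_1(t)\neq 0$ and $\Delta_2(t)\neq 0$ for every $t\in[0,T]$; by continuity both signs are preserved, and, up to replacing $(\Delta_1,\Delta_2)$ by $(-\Delta_1,-\Delta_2)$, I may assume
\begin{equation*}
\Delta_1(t) > 0, \quad \Delta_2(t) < 0, \quad t \in [0,T].
\end{equation*}
Plugging this into \eqref{eqdf}, $\ddot \Delta_1 = \mathrm{Tr}(A)\Delta_1 + 2\Delta_2$, and using $\mathrm{Tr}(A) \leq -\alpha^3/\Sigma^2$ from Lemma \ref{lemmm1}, I obtain the pointwise linear inequality
\begin{equation*}
\ddot\Delta_1(t) \;\leq\; -\mu^2\,\Delta_1(t), \qquad \mu^2 := \frac{\alpha^3}{\Sigma^2}, \qquad t \in [0,T].
\end{equation*}
The improvement over Theorem \ref{T2} is already visible: the constant $\mu^2$ is twice the $\zeta^2$ of the zero angular momentum case, because the term $2\Delta_2$ now pushes $\ddot\Delta_1$ further down, instead of being absorbed through the $\delta = \sqrt{\Delta_1}$ substitution and discarded via \eqref{deltain}.

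Finally, I apply Sturm--Liouville comparison with the oscillator $\ddot z + \mu^2 z = 0$, whose consecutive zeros are spaced at distance $\pi/\mu = \pi\Sigma/\alpha^{3/2} = T$. Concretely, setting $z(t) = \sin(\mu t)$ and $W(t) = \Delta_1 \dot z - \dot\Delta_1 z$, a one-line computation yields $W(0) = \mu\Delta_1(0) > 0$ and $\dot W = -(\ddot\Delta_1+\mu^2\Delta_1)\sin(\mu t) \geq 0$ on $[0,T]$, hence $W(T) \geq W(0) > 0$; but $W(T) = -\mu\Delta_1(T)$, forcing $\Delta_1(T) < 0$ and contradicting positivity. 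Rather than being a technical obstacle, the key observation is that the antisymmetry hypothesis amounts precisely to the sign condition $\Delta_1\Delta_2 < 0$, which is exactly what is needed to turn the otherwise troublesome $2\Delta_2$ term in \eqref{eqdf} into a favourable one and produce the sharper bound $T = T_1/\sqrt{2}$ valid for arbitrary angular momentum.
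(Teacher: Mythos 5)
Your proposal is correct and follows essentially the same route as the paper: argue by contradiction, observe that the antisymmetry hypothesis and the absence of generalised syzygies force $\Delta_2/\Delta_1<0$ on $[0,T]$, feed this sign into \eqref{eqdf} together with Lemma \ref{lemmm1} to get $\ddot\Delta_1\le -(\alpha^3/\Sigma^2)\Delta_1$, and conclude by Sturm--Liouville comparison on an interval of length $\pi\Sigma/\alpha^{3/2}$. The only (harmless) differences are presentational: you make the identification of antisymmetry with $\Delta_1(0)\Delta_2(0)<0$ and the sign-preservation step explicit, and you carry out the comparison via the Wronskian $W=\Delta_1\dot z-\dot\Delta_1 z$ rather than citing the comparison theorem.
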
 
\begin{proof}

Writing $d=\Delta_2 / \Delta_1$,  the equation  \eqref{eqdf}  can be transformed into  the second-order  Hill's linear differential equation:
\begin{equation} \label{yu}
\ddot \Delta_1= (\mathrm{Tr}(A)+ 2d ) \Delta_1\,.
\end{equation}

 We suppose now  that the solution of \eqref{PLC} $ t \mapsto  z_i(t)$, $i=1,2,3$ is defined in the interval  $[0,T]$, with $T$ given in \eqref{FO1}.  
Assuming it starts at $t=0$  from an antisymmetric configuration and that no generalised syzygy happens for any $0<t \leq T$ we obtain 
\begin{equation} \label{ddd}
d(t)=\frac{\Delta_2}{\Delta_1}(t)<0, \quad \Delta_1(t)  \neq 0, \quad  \forall t\in [0, T]\,.
\end{equation}

Therefore,  according to  \eqref{yu}, \eqref{ddd} and the Lemma  \ref{lemmm1}:
\begin{equation}  \label{equation} 
\ddot \Delta_1= \phi  \Delta_1, \quad \phi=  \mathrm{Tr}(A)+2d   \leq -\theta^2, \quad \theta^2=\alpha^3/ \Sigma^2,   \quad t\in [0, T]\,. 
\end{equation}

Comparing  \eqref{yu}  with $\ddot y=-\theta^2 y$, and using the same Sturm-Liouville argument as in the proof of Theorem \ref{T2}, we conclude that $\Delta_1$ admits at least one zero in the interval $[ 0, \pi / \theta]=[0,T]$. This contradicts our hypothesis  \eqref{ddd}. The proof of Theorem  \ref{T} is finished.
\end{proof}


\section{Conclusion}

Montgomery \cite{M1} showed the existence of syzygies in the three-body problem for the case of zero angular momentum and negative energy, except for the Lagrange homothetic solutions.
Our Theorem \ref{T} is free from the restriction on the angular momentum and   contains both an easy-to-check sufficient condition and an upper bound on the time instant when the generalised syzygy occurs.
Theorem \ref{Theorem2} deals with the collinear configurations (syzygies) in the periodic case.  Under the assumption  that the triangle formed by the bodies obeys a geometric restriction called $\theta$-rigidity, we show that the bodies become aligned at some instant, resulting in a syzygy in the corresponding solution.  We also conjecture that a similar result holds for bounded non-periodic solutions.
A more in-depth analysis of equations \eqref{meqs} and \eqref{RIC} may reveal additional interesting properties regarding collinear configurations in the three-body problem.\\

\noindent {\bf Akcnowledgments}\\
\noindent I am deeply grateful to Alain Chenciner  and Richard Montgomery  for their invaluable suggestions and insightful remarks.

\end{document}